\documentclass[11pt]{article}

\usepackage[T1]{fontenc}
\usepackage[utf8]{inputenc}
\usepackage{lmodern}
\usepackage[a4paper,margin=3cm]{geometry}
\usepackage[reqno]{amsmath}
\usepackage{amssymb,amsthm,mathtools}
\usepackage{enumitem}
\usepackage{microtype}
\usepackage[hidelinks]{hyperref}

\newtheorem{theorem}{Theorem}[section]
\newtheorem{proposition}[theorem]{Proposition}
\newtheorem{lemma}[theorem]{Lemma}
\newtheorem{corollary}[theorem]{Corollary}

\theoremstyle{definition}
\newtheorem{definition}[theorem]{Definition}

\theoremstyle{remark}
\newtheorem{remark}[theorem]{Remark}

\newcommand{\R}{\mathbb R}
\newcommand{\K}{\mathcal K}
\newcommand{\Lop}{\mathcal L}
\newcommand{\NA}{\mathrm{NA}}
\newcommand{\wk}{\rightharpoonup}
\newcommand{\spann}{\operatorname{span}}
\newcommand{\dBM}{d_{\mathrm{BM}}}

\title{The weak maximizing property, compact perturbations and duality}

\author{%
Domingo Garc\'ia$^{1}$, Manuel Maestre$^{1}$, Dami\'an Pinasco$^{2,3}$,
and Ignacio Zalduendo$^{2}$\\[0.8ex]
\small $^{1}$Departamento de An\'alisis Matem\'atico, Universitat de Val\`encia,\\
\small 46100 Burjassot (Valencia), Spain\\
\small $^{2}$Departamento de Matem\'atica y Estad\'istica, Universidad Torcuato Di Tella,\\
\small Av. Figueroa Alcorta 7350, C1428BCW Buenos Aires, Argentina\\
\small $^{3}$Consejo Nacional de Investigaciones Cient\'ificas y T\'ecnicas (CONICET),\\
\small Buenos Aires, Argentina\\[0.8ex]
\small \texttt{domingo.garcia@uv.es}; \texttt{manuel.maestre@uv.es}\\
\small \texttt{dpinasco@utdt.edu}; \texttt{izalduendo@utdt.edu}\\[0.5ex]
}

\date{}

\begin{document}

\maketitle

\begin{abstract}
We construct a one-parameter family $(X_c)_{0<c<1}$ of real reflexive Banach spaces,
isomorphic to $\ell_2$ and satisfying $\dBM(X_c,\ell_2)\le c^{-1}$, such that
$(X_c,X_c)$ fails the weak maximizing property while $(X_c^*,X_c^*)$ has it. The failure
of the WMP for $X_c$ is witnessed by a diagonal operator which does not attain
its norm but admits a non-weakly null maximizing sequence. On the dual side, a
separated-block estimate yields, after dualization, a reverse Pythagorean inequality
and a quantitative form of the Opial property; together with property $(M)$, this gives
the weak maximizing property for $X_c^*$. We also show that the compact perturbation
property is self-dual for reflexive pairs. Since the weak maximizing property implies
the compact perturbation property, each $(X_c,X_c)$ has the compact perturbation
property. Consequently, the compact perturbation property does not imply the weak
maximizing property even arbitrarily close to Hilbert space, and the weak maximizing
property is not stable under duality in the reflexive setting.
\end{abstract}

\medskip

\noindent
\textbf{2020 Mathematics Subject Classification.}
Primary 46B20; Secondary 47B01, 46B10.

\medskip

\noindent
\textbf{Keywords.}
Weak maximizing property, compact perturbation property, duality,
norm-attaining operators, Banach--Mazur distance, renormings of Hilbert space.

\medskip

\section{Introduction}

Let $E$ and $F$ be Banach spaces and let $T\in\Lop(E,F)$. A sequence
$(x_n)\subset S_E$ is called a \emph{maximizing sequence} for $T$ if
\[
        \|Tx_n\|\longrightarrow \|T\|.
\]
The relation between maximizing sequences and norm attainment already appears in the
work of Pellegrino and Teixeira \cite{PT2009}. They proved, in particular, that for
$1<p<\infty$ and $1\le q<\infty$, an operator from $\ell_p$ into $\ell_q$ attains its
norm if and only if it admits a maximizing sequence which is not weakly null. Motivated
by this phenomenon, Aron, Garc\'ia, Pellegrino and Teixeira introduced in
\cite{AGPT2020} the \emph{weak maximizing property}: a pair $(E,F)$ has the WMP if
every operator $T\in\Lop(E,F)$ admitting a non-weakly null maximizing sequence attains
its norm.

In this introduction we will freely use some concepts and their corresponding notation;
precise definitions can be found in the next section of preliminaries.

One of the first consequences of the WMP observed in \cite[Proposition~2.4]{AGPT2020}
is the following stability under compact perturbations, which extends an earlier result
of Kover for Hilbert spaces \cite{Kover2005}. If $(E,F)$ has the WMP,
$T\in\Lop(E,F)$ and $K\in\K(E,F)$, then
\[
       \|T+K\|>\|T\|
       \quad\Longrightarrow\quad
       T+K\in\NA(E,F).
\]
This condition was later called the \emph{compact perturbation property} (CPP); see
\cite{JMRZ2024}. The converse implication is much less clear. A particularly suggestive
phenomenon was found by Dantas, Jung and Mart\'inez-Cervantes. They showed that if $E$
is an infinite-dimensional reflexive space, then the above compact perturbation
conclusion holds for the pair $(E,c_0)$; nevertheless, after an equivalent renorming of
$E$, the corresponding pair may fail the WMP \cite[Proposition~3.6]{DJM2021}. Since
$c_0$ is not reflexive, this left open the possibility that reflexivity of both spaces
might force the converse. Question~4.4 of \cite{DJM2021}, explicitly attributed there
to Richard Aron, asks precisely whether a reflexive pair with the CPP must have the
WMP.

The WMP has since been studied from several points of view. Dantas, Jung and
Mart\'inez-Cervantes investigated its behavior under subspaces, quotients and direct
sums \cite{DJM2021}. Garc\'ia-Lirola and Petitjean considered maximizing properties for
other topologies, in particular weak$^*$ variants, and obtained criteria in terms of
asymptotic geometry \cite{GLP2021}. Jung, Mart\'inez-Cervantes and Rueda Zoca studied
norm-attaining compact and rank-one perturbations and their relation with the WMP
\cite{JMRZ2024}. More recently, Han and Kim related the WMP and the CPP to property
$(M)$, the Opial property and $M$-ideals of compact operators \cite{HanKim2025}. In a
different direction, Miranda completed the characterization of the pairs
$(L_p[0,1],L_q[0,1])$ having the WMP \cite{Miranda2026}.

A second natural question concerns duality. Garc\'ia-Lirola and Petitjean asked in
\cite[Question~5.12]{GLP2021} whether the WMP of $(E,F)$ implies the
weak$^*$-to-weak$^*$ maximizing property of $(F^*,E^*)$. For reflexive spaces the weak
and weak$^*$ topologies involved coincide, so this becomes a question about the usual
WMP of the dual pair.

There is a simple connection between these two questions. As we show in
Proposition~\ref{prop:CPPdual}, the CPP is self-dual for reflexive pairs. Consequently,
if the CPP implied the WMP for all reflexive pairs, then the WMP itself would be stable
under duality in the reflexive setting. Thus a reflexive counterexample to dual stability
of the WMP automatically yields a negative answer to Aron's question.

We construct such counterexamples as a one-parameter family. For every $0<c<1$ we
define a real reflexive space $X_c$, isomorphic to $\ell_2$, such that
\[
       (X_c,X_c)\text{ does not have the WMP},
       \qquad
       (X_c^*,X_c^*)\text{ has the WMP}.
\]
Moreover,
\[
       \dBM(X_c,\ell_2)=\frac1c,
\]
so the examples approach Hilbert space in Banach--Mazur distance as $c\uparrow1$.
Since WMP implies CPP, $(X_c^*,X_c^*)$ has the CPP; the self-duality of CPP then shows
that $(X_c,X_c)$ has the CPP as well. Hence each $X_c$ is an explicit reflexive
counterexample to CPP implying WMP, and these counterexamples can be chosen
arbitrarily close to $\ell_2$. The point of the quantitative approximation here is the
simultaneous occurrence of CPP and failure of WMP on $(X_c,X_c)$, together with WMP on
$(X_c^*,X_c^*)$, rather than the mere possibility of destroying WMP by a small
renorming of Hilbert space.

The construction uses the spaces $A(N_n)$ introduced by Kalton in his study of
$M$-ideals of compact operators \cite{Kalton1993}. The norms used here depend on the
parameter $c$. Two features of them drive the argument. A first fact is that the norms
become asymptotically flat along vectors of the form
$\sqrt{1-c^2}\,e_0+e_n$; this produces a diagonal operator which does not attain its
norm but has a non-weakly null maximizing sequence. On the dual side, an elementary
one-step inequality yields a Pythagorean type estimate for separated blocks. After
dualization this becomes a reverse Pythagorean inequality in $X_c^*$, which gives a
quantitative form of the Opial property. Together with property $(M)$ and the results of
Han and Kim, this yields the WMP of $(X_c^*,X_c^*)$.

The paper is organized as follows. In Section~2 we recall the WMP and the CPP, show
the self-duality of the CPP for reflexive pairs, and state the criterion involving
property $(M)$ and the Opial property that will be used on the dual side. In Section~3
we introduce the family $X_c$, prove its quantitative proximity to $\ell_2$, and
establish the two estimates needed later. Section~4 constructs the diagonal operator
showing that $(X_c,X_c)$ fails the WMP. Section~5 identifies $X_c^*$, proves a
quantitative Opial inequality and deduces the WMP for $(X_c^*,X_c^*)$. The final
corollaries answer the two questions above, and we close with a remark describing a
direct proof of the CPP for $X_c$ through Kalton's $M$-ideal criterion.

\section{Preliminaries}

All Banach spaces in the paper are real. We write $\Lop(E,F)$ and $\K(E,F)$ for the
spaces of bounded and compact linear operators from $E$ to $F$, respectively, and
$\NA(E,F)$ for the set of norm-attaining operators. We write $x_n\rightharpoonup x$
to denote weak convergence.

\begin{definition}
A pair $(E,F)$ has the \emph{weak maximizing property} (WMP) if every
$T\in\Lop(E,F)$ which admits a maximizing sequence that is not weakly null attains its
norm.
\end{definition}

\begin{definition}
A pair $(E,F)$ has the \emph{compact perturbation property} (CPP) if for every
$T\in\Lop(E,F)$ and $K\in\K(E,F)$,
\[
       \|T+K\|>\|T\|
       \quad\Longrightarrow\quad
       T+K\in\NA(E,F).
\]
\end{definition}

As recalled in the introduction, WMP implies CPP
\cite[Proposition~2.4]{AGPT2020}. The next elementary observation will allow us to use
this implication in the opposite direction of duality.

\begin{proposition}\label{prop:CPPdual}
Let $E$ and $F$ be reflexive Banach spaces. Then
\[
       (E,F)\text{ has the CPP}
       \quad\Longleftrightarrow\quad
       (F^*,E^*)\text{ has the CPP}.
\]
\end{proposition}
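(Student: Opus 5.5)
The argument uses three standard facts about adjoints, combined with the reflexivity of $E$ and $F$. First, $T\mapsto T^*$ is an isometric bijection from $\Lop(E,F)$ onto $\Lop(F^*,E^*)$: it is surjective because every $S\in\Lop(F^*,E^*)$ equals $(S^*|_E)^*$ once we identify $E^{**}=E$ and $F^{**}=F$. Second, by Schauder's theorem this bijection maps $\K(E,F)$ onto $\K(F^*,E^*)$. Third, for reflexive spaces an operator attains its norm if and only if its adjoint does. Given these, the CPP for $(E,F)$ transfers verbatim to $(F^*,E^*)$. The converse direction follows by applying the same implication to the reflexive pair $(F^*,E^*)$, whose dual pair is $(E^{**},F^{**})=(E,F)$.

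\emph{Forward direction.} Assume $(E,F)$ has the CPP, and take $S\in\Lop(F^*,E^*)$ and $L\in\K(F^*,E^*)$ with $\|S+L\|>\|S\|$. Write $S=T^*$ and $L=K^*$ with $T\in\Lop(E,F)$. Here $K$ is compact by Schauder's theorem applied to $K^*=L$, or equivalently because $K=L^*|_E$ and $L^*$ is compact. Then $\|T+K\|=\|S+L\|>\|S\|=\|T\|$, so the CPP for $(E,F)$ gives $T+K\in\NA(E,F)$. It remains to deduce $S+L=(T+K)^*\in\NA(F^*,E^*)$.

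\emph{Norm attainment of adjoints.} This is the only point needing an argument. Let $R\in\Lop(E,F)$ attain its norm at $x_0\in S_E$. By Hahn--Banach, choose $y^*\in S_{F^*}$ with $y^*(Rx_0)=\|Rx_0\|=\|R\|$. Then
\[
\|R^*y^*\|\ge (R^*y^*)(x_0)=y^*(Rx_0)=\|R\|=\|R^*\|,
\]
so $R^*$ attains its norm at $y^*$. In the converse direction, this step is applied to operators $F^*\to E^*$ and yields norm attainment of their adjoints, which are operators $E^{**}\to F^{**}$. Reflexivity is what identifies these adjoints with the original operators $E\to F$. Note also that we only need ``$R$ attains $\Rightarrow R^*$ attains'', which holds without reflexivity; reflexivity is used only for the identifications and the surjectivity of $T\mapsto T^*$.

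No step is a genuine obstacle. The only care needed is checking that the canonical identifications $E^{**}=E$ and $F^{**}=F$ make $(T^*)^*=T$, so that the converse really is the same statement applied to the dual pair.
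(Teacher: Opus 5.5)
Your proof is correct and follows essentially the same route as the paper: write $S=T^*$ and $L=K^*$ using reflexivity, get compactness of $K$ from Schauder's theorem, apply the CPP of $(E,F)$ to $T+K$, pass norm attainment to $(T+K)^*=S+L$, and obtain the converse by applying the same argument to the reflexive pair $(F^*,E^*)$. The only difference is that you spell out the Hahn--Banach argument showing that norm attainment passes from $R$ to $R^*$, which the paper uses without proof.
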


\begin{proof}
Assume first that $(E,F)$ has the CPP. Let $J_E:E\to E^{**}$ and
$J_F:F\to F^{**}$ denote the canonical isometries. Given
$S\in\Lop(F^*,E^*)$ and $L\in\K(F^*,E^*)$, define
\[
       T=J_F^{-1}S^*J_E\in\Lop(E,F),
       \qquad
       K=J_F^{-1}L^*J_E\in\Lop(E,F).
\]
Reflexivity makes these formulas well defined, and the canonical identifications give
$S=T^*$ and $L=K^*$. Moreover, $K$ is compact by Schauder's theorem. If
\[
       \|S+L\|>\|S\|,
\]
then, since adjoints preserve the operator norm,
\[
       \|T+K\|>\|T\|.
\]
Hence $T+K$ attains its norm. Norm attainment passes from an operator to its adjoint, so
$S+L=(T+K)^*$ attains its norm. Therefore $(F^*,E^*)$ has the CPP. The converse follows
by applying the same argument to the reflexive spaces $F^*$ and $E^*$.
\end{proof}

\begin{remark}\label{rem:logical-link}
Proposition~\ref{prop:CPPdual} links the two questions discussed in the introduction. If
one had
\[
       \text{CPP}\Longrightarrow\text{WMP}
\]
for every reflexive pair, then for reflexive $E,F$ one would have
\[
 (E,F)\text{ WMP}
 \Longrightarrow (E,F)\text{ CPP}
 \Longrightarrow (F^*,E^*)\text{ CPP}
 \Longrightarrow (F^*,E^*)\text{ WMP}.
\]
Thus failure of dual stability for WMP necessarily produces a reflexive pair with CPP
but without WMP.
\end{remark}

We next recall property $(M)$ in the form needed below.

\begin{definition}
A Banach space $E$ has \emph{property $(M)$} if, whenever $x,y\in E$ satisfy
$\|x\|=\|y\|$ and $(u_n)$ is weakly null in $E$, one has
\[
     \limsup_n\|x+u_n\|=\limsup_n\|y+u_n\|.
\]
\end{definition}

We also recall the Opial property \cite{Opial1967}. A Banach space $E$ has the
\emph{Opial property} if for every $x\ne0$ and every weakly null sequence $(x_n)$,
\[
       \limsup_n\|x_n\|<\limsup_n\|x+x_n\|.
\]
Han and Kim introduced strict property $(M)$ and showed that property $(M)$ together
with the corresponding property $(O)$ is equivalent to strict property $(M)$
\cite[Lemma~14]{HanKim2025}. They also proved that the diagonal pair $(E,E)$ has the
relevant property precisely when the space $E$ has it
\cite[Proposition~15]{HanKim2025}, and that strict property $(M)$ implies the WMP when
the domain is reflexive \cite[Theorem~12]{HanKim2025}. We shall use the following
immediate consequence.

\begin{proposition}\label{prop:HanKimcriterion}
Let $E$ be reflexive. If $E$ has property $(M)$ and the Opial property, then $(E,E)$ has
the WMP.
\end{proposition}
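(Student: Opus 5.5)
The plan is to chain the three results of Han and Kim recalled just before the statement; there is essentially nothing new to prove. Let $E$ be reflexive with property $(M)$ and the Opial property.

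\emph{Step 1: identify the Opial property with property $(O)$.} In the relevant form, property $(O)$ for a space requires that for every $x\neq 0$ and every weakly null sequence $(x_n)$ one has the strict inequality $\limsup_n\|x_n\|<\limsup_n\|x+x_n\|$. If Han and Kim phrase property $(O)$ with a different normalization, for instance with $\|x_n\|$ replaced by the norm of a fixed weakly null sequence in the two-sided form, I would reduce it to the Opial inequality above. The tool is the equal-norm invariance from property $(M)$: it lets one replace $x$ by any vector of the same norm in the $\limsup$ expressions, so the two formulations coincide.

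\emph{Step 2: obtain strict property $(M)$.} By \cite[Lemma~14]{HanKim2025}, property $(M)$ together with property $(O)$ is equivalent to strict property $(M)$. So $E$ has strict property $(M)$.

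\emph{Step 3: pass to the pair.} By \cite[Proposition~15]{HanKim2025}, the diagonal pair $(E,E)$ has strict property $(M)$ in the sense for pairs, because $E$ has it as a space.

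\emph{Step 4: conclude the WMP.} Since the domain $E$ is reflexive, \cite[Theorem~12]{HanKim2025} gives that $(E,E)$ has the WMP.

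The only genuine obstacle is bookkeeping of definitions. I need to check that the Opial property as stated in this paper is exactly, or at least implies, the property $(O)$ used in \cite[Lemma~14]{HanKim2025}. I also need to check that the pair version of strict property $(M)$ in \cite[Proposition~15]{HanKim2025} matches the hypothesis of \cite[Theorem~12]{HanKim2025}. If property $(O)$ there is formulated for pairs through operators, I would specialize to the identity operator and again use property $(M)$ to reduce to the Opial inequality. Nothing else should require argument.
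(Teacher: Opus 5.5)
Your proposal is correct and follows the paper's argument exactly. The paper gives no separate proof: it presents the statement as an immediate consequence of Han--Kim's Lemma~14 (property $(M)$ together with property $(O)$, i.e.\ the Opial property, yields strict property $(M)$), Proposition~15 (passage from $E$ to the diagonal pair $(E,E)$) and Theorem~12 (strict property $(M)$ with reflexive domain gives the WMP), and it leaves the definitional bookkeeping you flag in Step~1 implicit by calling the Opial property ``the corresponding property $(O)$''.
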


\section{The family \texorpdfstring{$X_c$}{Xc} and its basic estimates}

A norm $N$ on $\R^2$ is \emph{absolute} if
$N(s,t)=N(|s|,|t|)$ and \emph{normalized} if
$N(1,0)=N(0,1)=1$. Given a sequence $(N_n)_{n\ge1}$ of absolute normalized norms on
$\R^2$, we shall define a norm on $c_{00}$. To this end, define recursively a family of
norms $M_n$ on $\R^{n+1}$ by
\[
 M_0(x_0)=|x_0|,
\]
and, for $n\ge1$,
\[
 M_n(x_0,\ldots,x_n)
 =N_n\bigl(M_{n-1}(x_0,\ldots,x_{n-1}),|x_n|\bigr).
 \tag{3.1}\label{eq:recursive}
\]
Since each $N_n$ is normalized and absolute, $N_n(s,0)=|s|$, and therefore the norms
$M_n$ are compatible in the sense that
\[
 M_n(x_0,\ldots,x_{n-1},0)=M_{n-1}(x_0,\ldots,x_{n-1}).
\]
Thus, if $x=(x_j)_{j\ge0}\in c_{00}$, we may define
\[
 \|x\|=M_n(x_0,\ldots,x_n)
\]
for any $n$ such that $x_j=0$ for every $j>n$; the compatibility above shows that this
definition is independent of the choice of $n$. The completion of $c_{00}$ with respect
to this norm is denoted by $A(N_n)$. We write $(e_j)_{j\ge0}$ for its canonical basis
and $P_m$ for the canonical projection onto $\spann\{e_0,\ldots,e_m\}$.

Fix $0<c<1$. For $n\ge1$ define
\[
 N_{n,c}(s,t)
 :=\left[
 (1-c^{2n})|t|^{2n}
 +\left(s^2+c^2t^2\right)^n
 \right]^{1/(2n)}.
 \tag{3.2}\label{eq:Nnc}
\]
We set
\[
        X_c:=A(N_{n,c}).
\]

Let
\[
 \alpha_{n,c}:=(1-c^{2n})^{1/(2n)},
 \qquad
 A_c(s,t):=(s^2+c^2t^2)^{1/2}.
\]
Then
\[
 N_{n,c}(s,t)
 =\bigl\|(\alpha_{n,c}|t|,A_c(s,t))\bigr\|_{\ell_{2n}^2}.
 \tag{3.3}\label{eq:Nrepresentation}
\]
Minkowski's inequality, first for $A_c$ and then in $\ell_{2n}^2$, shows that
$N_{n,c}$ is an absolute norm. Moreover,
\[
 N_{n,c}(1,0)=1,
 \qquad
 N_{n,c}(0,1)^{2n}=1-c^{2n}+c^{2n}=1,
\]
so it is normalized. We may therefore use Kalton's results. In particular, the canonical
basis of $X_c$ is $1$-unconditional and $X_c$ has property $(M)$
\cite[Proposition~3.2]{Kalton1993}.

The parameter $c$ controls quantitatively how far the norm is from the Hilbert norm.
We use the convention
\[
 d_{\mathrm{BM}}(E,F)
 =\inf\{\|U\|\,\|U^{-1}\|:U:E\to F\text{ is an isomorphism}\}.
\]

\begin{lemma}\label{lem:l2estimate}
For every $n\ge1$ and $s,t\in\R$,
\[
  (s^2+c^2t^2)^{1/2}
  \le N_{n,c}(s,t)
  \le (s^2+t^2)^{1/2}.
  \tag{3.4}\label{eq:onestep}
\]
Consequently, for every $x=(x_j)\in c_{00}$,
\[
 \left(|x_0|^2+c^2\sum_{j\ge1}|x_j|^2\right)^{1/2}
 \le \|x\|_{X_c}
 \le \|x\|_2.
 \tag{3.5}\label{eq:l2weighted}
\]
In particular,
\[
       c\|x\|_2\le \|x\|_{X_c}\le\|x\|_2,
       \tag{3.6}\label{eq:l2equiv}
\]
so $X_c$ is isomorphic to $\ell_2$, is reflexive, and
\[
       \dBM(X_c,\ell_2)\le \frac1c.
       \tag{3.7}\label{eq:BM}
\]
The same bound holds for $X_c^*$.
\end{lemma}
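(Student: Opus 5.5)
The plan is to prove the one-step inequality \eqref{eq:onestep} first, then propagate it through the recursion \eqref{eq:recursive} by induction on $n$. For the lower bound in \eqref{eq:onestep}, use the representation \eqref{eq:Nrepresentation}: the $\ell_{2n}^2$ norm of a vector dominates the absolute value of each coordinate. Hence
\[
N_{n,c}(s,t)\ge A_c(s,t)=(s^2+c^2t^2)^{1/2}.
\]
For the upper bound, set $a=s^2+c^2t^2\ge0$ and $b=(1-c^2)t^2\ge0$, so that $s^2+t^2=a+b$. The claim $N_{n,c}(s,t)^{2n}\le (s^2+t^2)^n$ then reads
\[
(1-c^{2n})t^{2n}+a^n\le (a+b)^n .
\]
Expanding binomially, $(a+b)^n\ge a^n+b^n+\sum_{k=1}^{n-1}\binom nk a^kb^{n-k}$. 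The mixed terms with $a\ge c^2t^2$ give
\[
\sum_{k=1}^{n-1}\binom nk a^kb^{n-k}\ge t^{2n}\sum_{k=1}^{n-1}\binom nk c^{2k}(1-c^2)^{n-k}.
\]
Adding $b^n=t^{2n}(1-c^2)^n$ to this sum yields $t^{2n}\bigl[(c^2+(1-c^2))^n-c^{2n}\bigr]=(1-c^{2n})t^{2n}$, which is exactly what is needed. This binomial comparison is the only step requiring any care.

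For \eqref{eq:l2weighted}, induct on $n$ to show
\[
\Bigl(x_0^2+c^2\sum_{j=1}^n x_j^2\Bigr)^{1/2}\le M_n(x_0,\dots,x_n)\le\Bigl(\sum_{j=0}^n x_j^2\Bigr)^{1/2}.
\]
The case $n=0$ is trivial. For the inductive step, write $M_n=N_{n,c}(M_{n-1},|x_n|)$ and apply \eqref{eq:onestep} with $s=M_{n-1}$ and $t=x_n$. This gives
\[
(M_{n-1}^2+c^2x_n^2)^{1/2}\le M_n\le (M_{n-1}^2+x_n^2)^{1/2}.
\]
Inserting the inductive bounds for $M_{n-1}$ gives both sides, since the maps $u\mapsto(u^2+c^2x_n^2)^{1/2}$ and $u\mapsto(u^2+x_n^2)^{1/2}$ are increasing for $u\ge0$.

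Since $c<1$, the left side of \eqref{eq:l2weighted} is at least $c\|x\|_2$, which gives \eqref{eq:l2equiv} on $c_{00}$. By density the identity on $c_{00}$ extends to an isomorphism $U:X_c\to\ell_2$ with $\|U\|\le 1/c$ and $\|U^{-1}\|\le1$. Hence $X_c$ is isomorphic to $\ell_2$, is reflexive, and satisfies \eqref{eq:BM}. For the dual, the adjoint $U^*:\ell_2\to X_c^*$ is an isomorphism with $\|U^*\|\,\|(U^*)^{-1}\|=\|U\|\,\|U^{-1}\|\le 1/c$, using $\ell_2^*\cong\ell_2$ isometrically. Therefore $\dBM(X_c^*,\ell_2)\le 1/c$ as well.
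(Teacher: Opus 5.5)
Your proof is correct and follows the same route as the paper: you read the lower bound off the formula, obtain the upper bound from the scalar inequality $(1-c^{2n})t^{2n}+(s^2+c^2t^2)^n\le(s^2+t^2)^n$, iterate through the recursion, and pass to adjoints for $X_c^*$. The only difference is that you verify that scalar inequality by a termwise binomial comparison rather than the paper's monotonicity argument for $g(u)=(u+1)^n-(u+c^2)^n-(1-c^{2n})$ with $u=s^2/t^2$; your version has the minor advantage of covering $n=1$ and $t=0$ without separate cases.
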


\begin{proof}
The lower estimate in \eqref{eq:onestep} follows immediately from
\eqref{eq:Nnc}. For the upper estimate it is enough, by absoluteness, to assume
$s,t\ge0$. For $n=1$ the assertion is immediate. If $n>1$ and $t>0$, put
$u=s^2/t^2$ and consider
\[
 g(u):=(u+1)^n-(u+c^2)^n-(1-c^{2n}).
\]
Then $g(0)=0$ and, since $0<c<1$,
\[
 g'(u)=n\bigl((u+1)^{n-1}-(u+c^2)^{n-1}\bigr)>0.
\]
Hence
\[
 (1-c^{2n})t^{2n}+(s^2+c^2t^2)^n\le(s^2+t^2)^n,
\]
which is the desired upper estimate. The case $t=0$ is immediate.

Iterating the two inequalities in the recursion \eqref{eq:recursive} gives
\eqref{eq:l2weighted}, and \eqref{eq:l2equiv} follows. The identity between the
underlying vector spaces therefore gives \eqref{eq:BM}. Passing to adjoints preserves
the distortion of an isomorphism, so the same estimate holds for $X_c^*$.
\end{proof}

\begin{remark}\label{rem:exactBM}
The estimate \eqref{eq:BM} is sharp. More precisely,
\[
       d_{\mathrm{BM}}(X_c,\ell_2)=\frac1c.
\]
Here is a short argument. For fixed $k\ge1$ put
\[
       v_{m,k}=\sum_{j=1}^k e_{m+j}.
\]
Since, for fixed $s\ge0$,
\[
       N_{n,c}(s,1)\longrightarrow
       \max\{1,(s^2+c^2)^{1/2}\}
       \qquad(n\to\infty),
\]
and $|N_{n,c}(s,1)-N_{n,c}(s',1)|\le |s-s'|$, an induction on the
fixed number $k$ of coordinates gives
\[
       \lim_{m\to\infty}\|v_{m,k}\|_{X_c}^2=1+(k-1)c^2.
       \tag{3.8}\label{eq:blockBMlimit}
\]
Let $U:X_c\to\ell_2$ be any isomorphism and write
$A=\|U\|$, $B=\|U^{-1}\|$. Averaging over independent signs
$\varepsilon_j\in\{-1,1\}$ and using the $1$-unconditionality of the canonical basis,
\[
 \frac{k}{B^2}
 \le \sum_{j=1}^k\|Ue_{m+j}\|_2^2
 =\mathbb E_\varepsilon
   \left\|U\sum_{j=1}^k\varepsilon_je_{m+j}\right\|_2^2
 \le A^2\|v_{m,k}\|_{X_c}^2.
\]
Letting first $m\to\infty$ and then $k\to\infty$ yields
$A^2B^2\ge c^{-2}$. Thus every isomorphism from $X_c$ onto $\ell_2$ has distortion at
least $1/c$, and \eqref{eq:BM} gives the equality. By passing to adjoints, the same
argument also gives $d_{\mathrm{BM}}(X_c^*,\ell_2)=1/c$.
\end{remark}

Since the canonical basis is a Schauder basis and $X_c$ is reflexive, it is shrinking.
Thus
\[
       P_m^*f\longrightarrow f\quad(f\in X_c^*),
       \qquad e_n\wk0.
       \tag{3.9}\label{eq:shrinking}
\]
We shall also use that every coordinate projection, in particular $P_m$ and $I-P_m$, is
a contraction; this follows from the $1$-unconditionality of the basis.

We now show the estimate which will drive the analysis of the dual space.

\begin{lemma}\label{lem:Ndifference}
For every $n\ge1$, $t\in\R$ and $0\le b\le a$,
\[
       N_{n,c}(a,t)^2-N_{n,c}(b,t)^2\le a^2-b^2.
       \tag{3.10}\label{eq:Ndifference}
\]
\end{lemma}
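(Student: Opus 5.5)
The claim is equivalent to saying that $\phi(u):=N_{n,c}(\sqrt u,t)^2$, viewed as a function of $u=s^2\ge0$, has slope at most $1$: indeed \eqref{eq:Ndifference} reads $\phi(a^2)-\phi(b^2)\le a^2-b^2$. We will therefore fix $t$, which by absoluteness we may take $t\ge0$, and show that $\phi'(u)\le1$ for $u>0$. The inequality then follows from the mean value theorem, after a trivial continuity argument at $u=0$.

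For the computation, put $w:=u+c^2t^2=A_c(\sqrt u,t)^2$ and $\beta:=(1-c^{2n})t^{2n}$. Then
\[
 \phi(u)=\bigl(\beta+w^n\bigr)^{1/n},
 \qquad
 \phi'(u)=\bigl(\beta+w^n\bigr)^{1/n-1}w^{n-1}
 =\left(\frac{w^n}{\beta+w^n}\right)^{(n-1)/n}.
\]
Since $\beta\ge0$, the base lies in $(0,1]$ and the exponent $(n-1)/n$ is nonnegative, so $\phi'(u)\le1$. For $n=1$ the derivative is identically $1$, and \eqref{eq:Ndifference} holds with equality. When $t=0$ we have $\beta=0$ and $\phi(u)=u$, which is also trivial.

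An equivalent, derivative-free route views $\phi$ as $\Phi(w)=(\beta+w^n)^{1/n}$, a function of $w$ with $w$ shifted by the constant $c^2t^2$. One then checks that $\Phi(w_1)-\Phi(w_2)\le w_1-w_2$ for $w_1\ge w_2\ge0$. This follows from the $1$-Lipschitz property in the second coordinate of the $\ell_n$-norm $(\beta^{1/n},w)\mapsto\|(\beta^{1/n},w)\|_{\ell_n^2}$. The $\ell_n^2$ norm is monotone and satisfies $\|(p,q_1)\|-\|(p,q_2)\|\le|q_1-q_2|$ by the triangle inequality.

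I expect no serious obstacle. The only point needing care is recognising that one should work with squares, i.e.\ with the variable $u=s^2$, so that $A_c^2$ becomes affine in $u$ and the $\ell_{2n}$ structure in \eqref{eq:Nrepresentation} turns into an $\ell_n$ norm. Once that is done, the triangle inequality in $\ell_n^2$ gives the result directly.
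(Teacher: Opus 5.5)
Your main argument is correct and essentially the same as the paper's: the paper also sets $F(u)=N_{n,c}(\sqrt u,t)^2=\bigl[(1-c^{2n})|t|^{2n}+(u+c^2t^2)^n\bigr]^{1/n}$, shows $F'(u)\le 1$, and concludes that $u\mapsto F(u)-u$ is non-increasing. Your derivative-free variant, using the triangle inequality for the $\ell_n^2$ norm of $(\beta^{1/n},w)$ with $w=u+c^2t^2$, is also valid and slightly cleaner, since it needs no differentiability or endpoint argument.
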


\begin{proof}
Fix $n$ and $t$. If $t=0$, then $N_{n,c}(s,0)=|s|$ and the conclusion is immediate.
Assume therefore that $t\ne0$ and, for $u\ge0$, put
\[
 F(u):=N_{n,c}(\sqrt u,t)^2
 =\left[(1-c^{2n})|t|^{2n}
 +(u+c^2t^2)^n\right]^{1/n}.
\]
Then
\[
 F'(u)=
 \frac{(u+c^2t^2)^{n-1}}
 {\left[(1-c^{2n})|t|^{2n}+(u+c^2t^2)^n\right]^{(n-1)/n}}
 \le1.
\]
Hence $u\mapsto F(u)-u$ is non-increasing. Taking $u=a^2$ and $u=b^2$ gives
\eqref{eq:Ndifference}.
\end{proof}

\begin{proposition}\label{prop:blockprimal}
For every $m\ge0$, $u\in P_mX_c$ and $z\in(I-P_m)X_c$,
\[
       \|u+z\|_{X_c}^2\le \|u\|_{X_c}^2+\|z\|_{X_c}^2.
       \tag{3.11}\label{eq:blockprimal}
\]
\end{proposition}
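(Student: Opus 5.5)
By density it suffices to prove \eqref{eq:blockprimal} for finitely supported $u$ and $z$; both sides are continuous in the norm. Fix $u=(x_0,\ldots,x_m,0,\ldots)$ and $z=(0,\ldots,0,x_{m+1},\ldots,x_k,0,\ldots)$. The plan is to induct on $k\ge m$. The inductive statement is
\[
 M_k(x_0,\ldots,x_k)^2-M_k(0,\ldots,0,x_{m+1},\ldots,x_k)^2\le M_m(x_0,\ldots,x_m)^2 .
\]
For $k=m$ it is an equality, since the second term vanishes.

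For the inductive step $k-1\to k$, write
\[
a=M_{k-1}(x_0,\ldots,x_{k-1}),\qquad b=M_{k-1}(0,\ldots,0,x_{m+1},\ldots,x_{k-1}),\qquad t=x_k .
\]
By the recursion \eqref{eq:recursive},
\[
\|u+P_kz\|^2-\|(P_k-P_m)z\|^2=N_{k,c}(a,t)^2-N_{k,c}(b,t)^2 .
\]
Since coordinate projections are contractions ($1$-unconditionality), we have $b\le a$. Also $a,b\ge0$. Lemma~\ref{lem:Ndifference} therefore bounds the difference by $a^2-b^2$, and the inductive hypothesis bounds $a^2-b^2$ by $\|u\|^2$. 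This closes the induction. Taking $k$ large enough that $P_kz=z$ gives $\|u+z\|^2-\|z\|^2\le\|u\|^2$, which is \eqref{eq:blockprimal}.

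The only point that needs care is the hypothesis $0\le b\le a$ of Lemma~\ref{lem:Ndifference}. In the notation above, $b=\|(P_{k-1}-P_m)(u+z)\|$ and $a=\|P_{k-1}(u+z)\|$. Since $P_{k-1}-P_m$ is a coordinate projection and $(P_{k-1}-P_m)P_{k-1}=P_{k-1}-P_m$, we get $b\le a$ from the contraction property already noted after \eqref{eq:shrinking}. Apart from this, the argument is just bookkeeping with the recursion, so I expect no real obstacle.
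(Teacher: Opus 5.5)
Your proof is correct and follows essentially the paper's argument. Both run the same step-by-step induction through the recursion \eqref{eq:recursive}, apply Lemma~\ref{lem:Ndifference} at each step, and pass to general $z$ through the truncations $P_Nz$. The one cosmetic difference is how the hypothesis $b\le a$ is justified: you use contractivity of coordinate projections, while the paper gets $r_j\ge s_j$ inductively from coordinatewise monotonicity of absolute norms.
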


\begin{proof}
Assume first that $z$ is supported in $\{m+1,\ldots,N\}$. Set
\[
 r_m=\|u\|_{X_c},\qquad s_m=0,
\]
and, for $m<j\le N$,
\[
 r_j=N_{j,c}(r_{j-1},|z_j|),
 \qquad
 s_j=N_{j,c}(s_{j-1},|z_j|).
\]
Absolute norms are coordinatewise monotone, and therefore $r_j\ge s_j$. Applying
Lemma~\ref{lem:Ndifference} at each step gives
\[
 r_j^2-s_j^2\le r_{j-1}^2-s_{j-1}^2.
\]
After iteration,
\[
 \|u+z\|_{X_c}^2-\|z\|_{X_c}^2=r_N^2-s_N^2
 \le r_m^2=\|u\|_{X_c}^2.
\]
For arbitrary $z\in(I-P_m)X_c$, apply the finite-support estimate to $P_Nz$ and let
$N\to\infty$.
\end{proof}

The other special feature of $N_{n,c}$ is an asymptotic flattening along a fixed nonzero
direction. For $0<c<1$, put
\[
       \delta_c:=\sqrt{1-c^2}.
\]

\begin{lemma}\label{lem:flattening}
If
\[
       q_{n,c}:=\|\delta_c e_0+e_n\|_{X_c},
\]
then
\[
       q_{n,c}=(2-c^{2n})^{1/(2n)}>1,
       \qquad q_{n,c}\longrightarrow1.
       \tag{3.12}\label{eq:qnc}
\]
Moreover,
\[
       u_{n,c}:=\frac{\delta_c e_0+e_n}{q_{n,c}}\in S_{X_c},
       \qquad
       u_{n,c}\wk \delta_c e_0\ne0.
       \tag{3.13}\label{eq:uncweak}
\]
\end{lemma}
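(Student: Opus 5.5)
The plan is to compute $q_{n,c}$ directly from the recursive definition \eqref{eq:recursive}, and then to read off the weak limit from the shrinking property \eqref{eq:shrinking}.

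First, the vector $\delta_c e_0+e_n$ is supported in $\{0,n\}$. By the compatibility of the norms $M_k$ we have
\[
M_{n-1}(\delta_c,0,\ldots,0)=M_0(\delta_c)=\delta_c .
\]
The intermediate steps $k=1,\ldots,n-1$ only apply $N_{k,c}(\cdot,0)$, which is the identity on nonnegative reals because each $N_{k,c}$ is normalized and absolute. Hence
\[
q_{n,c}=N_{n,c}(\delta_c,1).
\]
Plugging into \eqref{eq:Nnc} with $s^2=\delta_c^2=1-c^2$ and $t=1$ gives $s^2+c^2t^2=1$. Therefore
\[
q_{n,c}^{2n}=(1-c^{2n})+1=2-c^{2n}.
\]
This is the formula in \eqref{eq:qnc}. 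Since $0<c<1$, we have $1<2-c^{2n}<2$, so
\[
1<q_{n,c}<2^{1/(2n)}\to1 .
\]
This gives both $q_{n,c}>1$ and $q_{n,c}\to1$.

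For \eqref{eq:uncweak}, the claim $u_{n,c}\in S_{X_c}$ is immediate from the definition of $q_{n,c}$. For the weak convergence, write
\[
u_{n,c}=q_{n,c}^{-1}\delta_c e_0+q_{n,c}^{-1}e_n .
\]
By \eqref{eq:shrinking}, $e_n\wk0$, and the scalars $q_{n,c}^{-1}$ are bounded, so the second term is weakly null. The first term converges in norm to $\delta_c e_0$, because $q_{n,c}^{-1}\to1$. Hence $u_{n,c}\wk\delta_c e_0$, and $\delta_c>0$ gives $\delta_c e_0\neq0$.

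There is no real obstacle here. The only point needing care is justifying that the zero coordinates $1,\ldots,n-1$ do not change the running value in the recursion. This follows from $N_{k,c}(s,0)=|s|$, already noted in Section~3.
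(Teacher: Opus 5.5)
Your proposal is correct and follows the paper's proof essentially step for step. You compute $q_{n,c}=N_{n,c}(\delta_c,1)=(2-c^{2n})^{1/(2n)}$ via the recursion, using $N_{k,c}(s,0)=|s|$ for the intermediate zero coordinates, and obtain the weak limit from $e_n\wk0$ and $q_{n,c}\to1$; your explicit bound $1<q_{n,c}<2^{1/(2n)}$ just spells out what the paper calls immediate.
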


\begin{proof}
All coordinates between $e_0$ and $e_n$ are zero, so the recursion remains equal to
$\delta_c$ until the $n$-th step. Since $\delta_c^2+c^2=1$,
\[
 q_{n,c}=N_{n,c}(\delta_c,1)
 =\left[(1-c^{2n})+1\right]^{1/(2n)}
 =(2-c^{2n})^{1/(2n)}.
\]
The stated properties of $q_{n,c}$ follow immediately. Finally,
\eqref{eq:shrinking} gives $e_n\wk0$, and \eqref{eq:uncweak} follows from
$q_{n,c}\to1$.
\end{proof}

\section{Failure of the weak maximizing property in \texorpdfstring{$X_c$}{Xc}}

We now construct an operator which witnesses the failure of the WMP for every member of
the family. Define $D_c\in\Lop(X_c)$ on the canonical basis by
\[
       D_ce_0=0,
       \qquad
       D_ce_n=d_ne_n,
       \qquad
       d_n:=\frac{n}{n+1}\quad(n\ge1).
       \tag{4.1}\label{eq:Dc}
\]
The formula is independent of $c$; the subscript only indicates the ambient norm. Since
the basis is $1$-unconditional and $0\le d_n<1$, $D_c$ is a contraction. On the other
hand,
\[
       \|D_ce_n\|_{X_c}=d_n\longrightarrow1,
\]
so $\|D_c\|=1$.

The fact that every diagonal coefficient is strictly smaller than one does not by itself
exclude norm attainment on a vector with infinite support. We therefore keep the
following argument explicit.

\begin{proposition}\label{prop:DnotNA}
For every $x=\sum_{j\ge0}x_je_j\in X_c\setminus\{0\}$,
\[
       \|D_cx\|_{X_c}<\|x\|_{X_c}.
\]
In particular, $D_c$ does not attain its norm.
\end{proposition}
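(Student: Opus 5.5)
The plan is to dominate $D_cx$ coordinatewise by a vector that is a genuine convex combination, and then use that the tails $(I-P_m)x$ become small. There is no need to track strict monotonicity through the infinitely many steps of the recursion. That route looks fragile: by Lemma~\ref{lem:Ndifference} and the formula for $F'$ in its proof, the one-step gains can decay like $c^{2(j-1)}$, so strictness could be lost in the limit.

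Fix $x=\sum_{j\ge0}x_je_j\ne0$. Since $(e_j)$ is a Schauder basis, $\|(I-P_m)x\|_{X_c}\to0$. So we choose $m\ge1$ with
\[
\|(I-P_m)x\|_{X_c}<\|x\|_{X_c}.
\]
Put $\lambda:=d_m=\tfrac{m}{m+1}<1$. The sequence $(d_j)$ (with $d_0=0$) is non-decreasing and bounded by $1$. Hence $|d_jx_j|\le\lambda|x_j|$ for $j\le m$ and $|d_jx_j|\le|x_j|$ for $j>m$. This means $|D_cx|$ is coordinatewise dominated by $|w|$, where
\[
w:=\lambda P_mx+(I-P_m)x=\lambda x+(1-\lambda)(I-P_m)x .
\]

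Absolute norms are coordinatewise monotone, so the norms $M_n$ are as well. Equivalently, $1$-unconditionality lets us write $D_cx$ as the image of $w$ under a diagonal contraction. Either way, $\|D_cx\|_{X_c}\le\|w\|_{X_c}$; for vectors of infinite support we pass to the limit through $P_N$ as in the proof of Proposition~\ref{prop:blockprimal}. The triangle inequality then gives
\[
\|D_cx\|_{X_c}\le\lambda\|x\|_{X_c}+(1-\lambda)\|(I-P_m)x\|_{X_c}<\|x\|_{X_c},
\]
and the inequality is strict because $1-\lambda>0$ and the tail estimate is strict. Since $\|D_c\|=1$, this immediately shows that $D_c$ does not attain its norm.

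The only delicate point is the choice of $m$. We cannot fix an index $k$ with $x_k\ne0$ and try to propagate a strict decrease in that coordinate through the recursion, because of the decay noted above. Choosing $m$ large enough that the tail is shorter than $\|x\|$ avoids this entirely, and the rest is the monotonicity of the norm together with the triangle inequality.
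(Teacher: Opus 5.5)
Your proof is correct, and it takes a genuinely different route from the paper's.

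The paper argues on the dual side. It picks a norming functional $f\in S_{X_c^*}$ for $D_cx$. It then flips signs to get $x^\sharp$ with $\varepsilon_jf(e_j)x_j=|f(e_j)x_j|$, so that $f(D_cx^\sharp)=\|D_cx\|_{X_c}$. Strictness comes from a single coordinate $j_0$ seen by $f$, via $f(x^\sharp)-f(D_cx^\sharp)\ge(1-d_{j_0})|f(e_{j_0})x_{j_0}|>0$.

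You stay entirely in $X_c$. The domination of $D_cx$ by $w=\lambda x+(1-\lambda)(I-P_m)x$, the contractivity of diagonal multipliers with entries in $[0,1]$, and the triangle inequality do all the work. Strictness comes from the norm convergence of the tails $(I-P_m)x$ rather than from a functional.

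What your version buys:
\begin{itemize}
\item It avoids Hahn--Banach and the sign-adjustment trick.
\item It gives an explicit gap, $\|x\|_{X_c}-\|D_cx\|_{X_c}\ge(1-d_m)\bigl(\|x\|_{X_c}-\|(I-P_m)x\|_{X_c}\bigr)$. The paper's gap $(1-d_{j_0})|f(e_{j_0})x_{j_0}|$ depends on a non-explicit functional.
\end{itemize}
What the paper's version buys is a one-coordinate certificate of strictness. It needs no choice of a cut-off $m$ adapted to $x$; the basis convergence enters only through the scalar partial sums $f(P_ND_cx^\sharp)$.

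Both arguments use only $1$-unconditionality and $0\le d_j<1$, so neither depends on the particular norms $N_{n,c}$. Your use of monotonicity of $(d_j)$ is inessential, since taking $\lambda=\max_{j\le m}d_j<1$ works for any such sequence.

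Your remark that propagating a strict decrease through the recursion is fragile is accurate. The flattening in Lemma~\ref{lem:flattening} is exactly such damping. However, it concerns a route the paper does not take either.
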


\begin{proof}
If $D_cx=0$ there is nothing to prove. Assume $D_cx\ne0$ and choose
$f\in S_{X_c^*}$ such that
\[
       f(D_cx)=\|D_cx\|_{X_c}>0.
\]
For each $j$ choose $\varepsilon_j\in\{-1,1\}$ so that
$\varepsilon_jf(e_j)x_j=|f(e_j)x_j|$, and put
\[
       x^\sharp=\sum_{j\ge0}\varepsilon_jx_je_j.
\]
The $1$-unconditionality of the basis gives
\[
       \|x^\sharp\|_{X_c}=\|x\|_{X_c},
       \qquad
       \|D_cx^\sharp\|_{X_c}=\|D_cx\|_{X_c}.
\]
For every $N$,
\[
\begin{aligned}
 f(P_ND_cx^\sharp)
 &=\sum_{j=1}^N d_j|f(e_j)x_j|\\
 &\ge \left|\sum_{j=1}^N d_jf(e_j)x_j\right|
 =|f(P_ND_cx)|.
\end{aligned}
\]
Passing to the limit and using convergence of the partial sums,
\[
       f(D_cx^\sharp)\ge |f(D_cx)|=\|D_cx\|_{X_c}.
\]
The reverse inequality follows from $\|f\|=1$, and hence
\[
       f(D_cx^\sharp)=\|D_cx\|_{X_c}.
       \tag{4.2}\label{eq:fnormsD}
\]
Since this number is positive, there is some $j_0\ge1$ such that
$|f(e_{j_0})x_{j_0}|>0$. For $N\ge j_0$,
\[
\begin{aligned}
 f(P_Nx^\sharp)-f(P_ND_cx^\sharp)
 &=|f(e_0)x_0|+
   \sum_{j=1}^N(1-d_j)|f(e_j)x_j|\\
 &\ge (1-d_{j_0})|f(e_{j_0})x_{j_0}|>0.
\end{aligned}
\]
Letting $N\to\infty$ and using \eqref{eq:fnormsD}, we obtain
\[
       \|x\|_{X_c}=\|x^\sharp\|_{X_c}
       \ge f(x^\sharp)
       >f(D_cx^\sharp)=\|D_cx\|_{X_c}.
\]
\end{proof}

\begin{theorem}\label{thm:XcnotWMP}
For every $0<c<1$, the pair $(X_c,X_c)$ does not have the weak maximizing property.
\end{theorem}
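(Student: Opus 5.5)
The witness will be the diagonal operator $D_c$ from \eqref{eq:Dc}. By Proposition~\ref{prop:DnotNA}, $D_c$ does not attain its norm, and $\|D_c\|=1$. It therefore suffices to exhibit a maximizing sequence for $D_c$ that is not weakly null. The natural candidate is the sequence $u_{n,c}$ of Lemma~\ref{lem:flattening}. That lemma already gives $u_{n,c}\in S_{X_c}$ and $u_{n,c}\wk\delta_ce_0\ne0$, so $(u_{n,c})$ is not weakly null.

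It remains to check that $\|D_cu_{n,c}\|_{X_c}\to1$. Since $D_ce_0=0$ and $D_ce_n=d_ne_n$, we have
\[
 D_cu_{n,c}=\frac{d_n}{q_{n,c}}\,e_n .
\]
Because the basis vectors are normalized (as $N_{n,c}(0,1)=1$), this gives
\[
 \|D_cu_{n,c}\|_{X_c}=\frac{d_n}{q_{n,c}}.
\]
Here $d_n=n/(n+1)\to1$, and $q_{n,c}\to1$ by \eqref{eq:qnc}, so the ratio tends to $1=\|D_c\|$. Hence $(u_{n,c})$ is a maximizing sequence for $D_c$.

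Combining the two facts, $D_c$ admits a non-weakly null maximizing sequence but does not attain its norm. This contradicts the definition of the WMP for $(X_c,X_c)$. There is no real obstacle here: the substantive work, namely non-attainment on vectors of infinite support and the flattening computation, was done in Proposition~\ref{prop:DnotNA} and Lemma~\ref{lem:flattening}. The only point to verify is that $\|e_n\|_{X_c}=1$, which follows directly from the recursion \eqref{eq:recursive} and normalization.
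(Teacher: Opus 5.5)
Your proposal is correct and matches the paper's proof essentially step for step. It uses the non-attainment of $D_c$ from Proposition~\ref{prop:DnotNA} and takes $(u_{n,c})$ from Lemma~\ref{lem:flattening} as a non-weakly null maximizing sequence with $\|D_cu_{n,c}\|_{X_c}=d_n/q_{n,c}\to1$; your explicit check that $\|e_n\|_{X_c}=N_{n,c}(0,1)=1$ is a detail the paper leaves implicit.
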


\begin{proof}
Proposition~\ref{prop:DnotNA} shows that $D_c$ does not attain its norm. On the other
hand, for the vectors $u_{n,c}$ of Lemma~\ref{lem:flattening},
\[
       D_cu_{n,c}=\frac{d_n}{q_{n,c}}e_n,
\]
and therefore
\[
       \|D_cu_{n,c}\|_{X_c}
       =\frac{d_n}{q_{n,c}}
       \longrightarrow1=\|D_c\|.
\]
Thus $(u_{n,c})$ is a maximizing sequence for $D_c$. By
\eqref{eq:uncweak},
\[
       u_{n,c}\wk\delta_c e_0\ne0,
\]
so this maximizing sequence is not weakly null. Hence $(X_c,X_c)$ fails the WMP.
\end{proof}

\begin{remark}\label{rem:XnotOpial}
The same computation shows that $X_c$ does not have the Opial property. Indeed,
$e_n\wk0$, $\|e_n\|_{X_c}=1$, and
\[
       \|\delta_c e_0+e_n\|_{X_c}=q_{n,c}\longrightarrow1.
\]
Thus $X_c$ has property $(M)$ but lacks the strict asymptotic inequality which will
reappear in its dual.
\end{remark}

\section{The dual spaces and the weak maximizing property}

Fix $0<c<1$. If $N$ is an absolute normalized norm on $\R^2$, denote its dual norm by
\[
       N^*(a,b)=\sup\{|as+bt|:N(s,t)\le1\}.
\]
The norm $N^*$ is again absolute and normalized. We shall use the elementary identity
\[
       (E\oplus_N\R)^*=E^*\oplus_{N^*}\R.
       \tag{5.1}\label{eq:dualsum}
\]
Under the natural identification, the norm of $(f,a)$ is $N^*(\|f\|,|a|)$.

\begin{proposition}\label{prop:Xdual}
If $(e_j^*)_{j\ge0}$ is the biorthogonal basis, then, isometrically,
\[
       X_c^*=A(N_{n,c}^*).
       \tag{5.2}\label{eq:Xdual}
\]
In particular, $X_c^*$ has property $(M)$.
\end{proposition}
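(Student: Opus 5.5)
We argue by induction on the recursion (3.1), using the dual-sum identity (5.1) at each step, and then pass from finite sections to the whole space by reflexivity and the shrinking property (3.9). Write $E_n=\spann\{e_0,\ldots,e_n\}\subset X_c$ with the norm $M_n$. By construction $E_n=E_{n-1}\oplus_{N_{n,c}}\R$ isometrically, the last summand spanned by $e_n$. By (5.1), $E_n^*=E_{n-1}^*\oplus_{N_{n,c}^*}\R$. Here $E_{n-1}^*$ is identified with the functionals on $E_n$ vanishing on $e_n$, and the $\R$-summand is spanned by the restriction of $e_n^*$. Inductively, starting from $E_0^*=\R$ with $M_0^*(a)=|a|$, we obtain for $g=\sum_{j\le n}a_je_j^*$
\[
 \|g\|_{E_n^*}=N_{n,c}^*\bigl(\|g|_{E_{n-1}}\|_{E_{n-1}^*},|a_n|\bigr).
\]
This is exactly the recursion defining the norm $M_n^*$ of $A(N_{n,c}^*)$ on its first $n+1$ coordinates. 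Since $N_{n,c}^*$ is absolute and normalized, these norms are compatible, so $A(N_{n,c}^*)$ is well defined.

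Next we globalize. Let $f\in X_c^*$. Since the $P_m$ are contractions and $\|x\|=\lim\|P_mx\|$, we have $\|f\|=\sup_m\|f|_{E_m}\|_{E_m^*}$. Moreover $f|_{E_m}$ corresponds to the coefficient vector $(f(e_0),\ldots,f(e_m))$, that is, to $P_m^*f$. Hence
\[
\|f\|_{X_c^*}=\sup_m M_m^*(f(e_0),\ldots,f(e_m))=\lim_m\|P_m^*f\|_{A(N^*)},
\]
where monotonicity in $m$ comes from the compatibility of the $M_m^*$. By (3.9), $P_m^*f\to f$ in $X_c^*$. So $(e_j^*)$ is a basis of $X_c^*$, and the map $f\mapsto(f(e_j))_j$ sends the dense subspace $\spann\{e_j^*\}$ isometrically onto $c_{00}$ with the $A(N_{n,c}^*)$ norm. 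By completeness this extends to an isometry of $X_c^*$ onto $A(N_{n,c}^*)$. This proves (5.2).

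Finally, $N_{n,c}^*$ is an absolute normalized norm, so Kalton's result [Proposition~3.2 of \cite{Kalton1993}], already invoked for $X_c$, applies to $A(N_{n,c}^*)$ and gives property $(M)$. Property $(M)$ is invariant under isometries, so $X_c^*$ has it.

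The main obstacle is justifying (5.1) with the correct identification. We must check that the norm of $(f,a)$ is $N^*(\|f\|,|a|)$. The upper bound comes from $|f(x)+at|\le\|f\|\|x\|+|a||t|$ together with the definition of $N^*$. For the lower bound, we choose $x$ with $\|x\|=1$ and $f(x)\approx\|f\|$, choose signs, and pick $(s,t)$ nearly attaining $N^*(\|f\|,|a|)$; absoluteness of $N$ lets us assume $s,t\ge0$. This uses only absoluteness, and we record it before running the induction.
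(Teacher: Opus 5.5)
Your proposal is correct and follows essentially the same route as the paper. You induct on the recursion via the dual-sum identity (5.1) to get the dual recursion on each finite section, globalize using density of finitely supported functionals from the shrinking property (3.9), and then invoke Kalton's Proposition~3.2 for property $(M)$. The only differences are that you globalize through $\|f\|=\sup_m\|f|_{E_m}\|$ plus compatibility of the dual norms rather than through the isometry $\varphi\mapsto\varphi\circ P_m$ induced by the norm-one projection $P_m$, and that you sketch the verification of (5.1), which the paper takes as elementary.
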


\begin{proof}
The initial block $E_m=P_mX_c$ is obtained recursively as
\[
       E_m=E_{m-1}\oplus_{N_{m,c}}\R e_m.
\]
Applying \eqref{eq:dualsum} inductively shows that if
$f=\sum_{j=0}^m a_je_j^*$, then its norm is computed by
\[
       r_0=|a_0|,
       \qquad
       r_j=N_{j,c}^*(r_{j-1},|a_j|),
       \qquad
       \|f\|=r_m.
       \tag{5.3}\label{eq:dualrecursion}
\]
To pass from $E_m^*$ to $X_c^*$, note that $P_m$ is a norm-one projection. Hence the
map $E_m^*\to X_c^*$ given by $\varphi\mapsto\varphi\circ P_m$ is an isometry onto
$\spann\{e_0^*,\ldots,e_m^*\}$. Since the basis of $X_c$ is shrinking, these
finite-support functionals are dense in $X_c^*$. This proves \eqref{eq:Xdual}. Since
every $N_{n,c}^*$ is absolute and normalized, property $(M)$ follows from
\cite[Proposition~3.2]{Kalton1993}.
\end{proof}

The block estimate of Proposition~\ref{prop:blockprimal} becomes, after dualization, a
reverse Pythagorean inequality.

\begin{proposition}\label{prop:blockdual}
Let $m\ge0$ and let $f,g\in X_c^*$ satisfy
\[
       P_m^*f=f,
       \qquad
       P_m^*g=0.
\]
Then
\[
       \|f+g\|_{X_c^*}^2
       \ge \|f\|_{X_c^*}^2+\|g\|_{X_c^*}^2.
       \tag{5.4}\label{eq:blockdual}
\]
\end{proposition}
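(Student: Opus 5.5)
We dualize Proposition~\ref{prop:blockprimal} by a norming argument that tests $f+g$ against a vector built from near-norming vectors for $f$ and $g$ separately. The trivial cases $f=0$ or $g=0$ are immediate, so assume both are nonzero.

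\textbf{Choice of test vectors.} Since $P_m^*f=f$, we have $f(x)=f(P_mx)$ for every $x$. Since $P_m$ is a contraction, $\|f\|=\sup\{f(u):u\in P_mX_c,\ \|u\|\le1\}$. Similarly, $P_m^*g=0$ gives $g=g\circ(I-P_m)$. As $I-P_m$ is a contraction, $\|g\|=\sup\{g(z):z\in(I-P_m)X_c,\ \|z\|\le1\}$. Fix $\varepsilon>0$ and choose unit vectors $u\in P_mX_c$ and $z\in(I-P_m)X_c$ with $f(u)\ge\|f\|-\varepsilon$ and $g(z)\ge\|g\|-\varepsilon$.

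\textbf{Testing $f+g$.} For scalars $\lambda,\mu\ge0$, put $x=\lambda u+\mu z$. Then $f(z)=f(P_mz)=0$ and $g(u)=g((I-P_m)u)=0$. Hence
\[
(f+g)(x)=\lambda f(u)+\mu g(z)\ge \lambda(\|f\|-\varepsilon)+\mu(\|g\|-\varepsilon).
\]
By Proposition~\ref{prop:blockprimal}, applied to $\lambda u\in P_mX_c$ and $\mu z\in(I-P_m)X_c$,
\[
\|x\|_{X_c}^2\le\lambda^2+\mu^2.
\]
Therefore
\[
\|f+g\|\ge\frac{\lambda(\|f\|-\varepsilon)+\mu(\|g\|-\varepsilon)}{(\lambda^2+\mu^2)^{1/2}}.
\]

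\textbf{Optimizing.} Take $\lambda=\|f\|$ and $\mu=\|g\|$. This is the Cauchy--Schwarz equality case in $\ell_2^2$. Letting $\varepsilon\to0$ gives
\[
\|f+g\|\ge(\|f\|^2+\|g\|^2)^{1/2},
\]
which is \eqref{eq:blockdual}.

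The main point to verify is the first step: that the norms of $f$ and $g$ are attained, up to $\varepsilon$, on the respective complementary blocks. This follows only from the contractivity of the coordinate projections $P_m$ and $I-P_m$, which is a consequence of $1$-unconditionality. The one-step analytic content of the argument lies entirely in Lemma~\ref{lem:Ndifference}, through Proposition~\ref{prop:blockprimal}.
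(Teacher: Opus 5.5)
Your proof is correct and is essentially the paper's argument. The paper notes that, by Proposition~\ref{prop:blockprimal}, the map $J_m(u,z)=u+z$ from $E_m\oplus_2 Z_m$ into $X_c$ is a contraction, so its adjoint is one too, which gives $\|J_m^*(f+g)\|\le\|f+g\|$. Your near-norming vectors $u,z$ with weights $\lambda=\|f\|$, $\mu=\|g\|$ are just the explicit unpacking of that inequality, including the duality $(E_m\oplus_2 Z_m)^*=E_m^*\oplus_2 Z_m^*$.
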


\begin{proof}
Put
\[
       E_m=P_mX_c,
       \qquad
       Z_m=(I-P_m)X_c,
\]
with their inherited norms. Proposition~\ref{prop:blockprimal} says that
\[
       J_m:E_m\oplus_2 Z_m\longrightarrow X_c,
       \qquad
       J_m(u,z)=u+z,
\]
is a contraction. Hence so is
\[
       J_m^*:X_c^*\longrightarrow E_m^*\oplus_2 Z_m^*.
\]
Since $P_m$ and $I-P_m$ are contractions and the assumptions imply that $f$ vanishes on
$Z_m$ and $g$ vanishes on $E_m$, respectively,
\[
       \|f|_{E_m}\|_{E_m^*}=\|f\|_{X_c^*},
       \qquad
       \|g|_{Z_m}\|_{Z_m^*}=\|g\|_{X_c^*}.
\]
Moreover,
\[
       J_m^*(f+g)=(f|_{E_m},g|_{Z_m}).
\]
Therefore
\[
 \bigl(\|f\|_{X_c^*}^2+\|g\|_{X_c^*}^2\bigr)^{1/2}
 =\|J_m^*(f+g)\|
 \le\|f+g\|_{X_c^*},
\]
which is \eqref{eq:blockdual}.
\end{proof}

We now obtain the asymptotic inequality which distinguishes $X_c^*$ from $X_c$.

\begin{theorem}\label{thm:quantOpial}
Let $f\in X_c^*$ and let $(h_k)\subset X_c^*$ be weakly null. If
\[
       L:=\limsup_k\|h_k\|_{X_c^*},
\]
then
\[
       \limsup_k\|f+h_k\|_{X_c^*}
       \ge
       \bigl(\|f\|_{X_c^*}^2+L^2\bigr)^{1/2}.
       \tag{5.5}\label{eq:quantOpial}
\]
In particular, $X_c^*$ has the Opial property.
\end{theorem}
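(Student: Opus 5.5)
The plan is a standard gliding-hump argument combined with the reverse Pythagorean inequality of Proposition~\ref{prop:blockdual}. First reduce to the case where $L$ is a genuine limit: pass to a subsequence with $\|h_k\|\to L$. This does not increase $\limsup_k\|f+h_k\|$, and the subsequence is still weakly null. Fix $\varepsilon>0$. Since the basis of $X_c$ is shrinking, \eqref{eq:shrinking} gives $P_m^*f\to f$. Choose $m$ with $\|f-P_m^*f\|<\varepsilon$ and put $f_m:=P_m^*f$, so that $P_m^*f_m=f_m$.

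Next split each $h_k$ as $h_k=P_m^*h_k+(I-P_m)^*h_k$. The operator $P_m^*$ has finite rank (its range is spanned by $e_0^*,\dots,e_m^*$), and $h_k\wk0$. Hence $\|P_m^*h_k\|\to0$, because weak and norm convergence agree in finite dimensions. Put $g_k:=(I-P_m)^*h_k$. Then $P_m^*g_k=0$, since $P_m(I-P_m)=0$. Moreover
\[
\|g_k\|\ge\|h_k\|-\|P_m^*h_k\|\longrightarrow L.
\]

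Now apply Proposition~\ref{prop:blockdual} to the pair $f_m,g_k$:
\[
\|f+h_k\|\ge\|f_m+g_k\|-\|f-f_m\|-\|P_m^*h_k\|
\ge\bigl(\|f_m\|^2+\|g_k\|^2\bigr)^{1/2}-\varepsilon-\|P_m^*h_k\|.
\]
Take $\limsup_k$ and use $\|f_m\|\ge\|f\|-\varepsilon$ together with $\liminf_k\|g_k\|\ge L$. This gives
\[
\limsup_k\|f+h_k\|\ge\bigl((\|f\|-\varepsilon)_+^2+L^2\bigr)^{1/2}-\varepsilon.
\]
Letting $\varepsilon\to0$ yields \eqref{eq:quantOpial}.

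For the Opial property, let $f\ne0$ and $h_k\wk0$. If $L>0$, then $(\|f\|^2+L^2)^{1/2}>L$. If $L=0$, then $\|f+h_k\|\ge\|f\|-\|h_k\|$, so $\limsup_k\|f+h_k\|\ge\|f\|>0=L$. In both cases $\limsup_k\|h_k\|<\limsup_k\|f+h_k\|$. There is no real obstacle here. The only points needing care are the finite-rank step $\|P_m^*h_k\|\to0$ and the bookkeeping of the $\varepsilon$-approximation of $f$. Both rely on the shrinking property \eqref{eq:shrinking} and the contractivity of $P_m$ and $I-P_m$, which gives $\|P_m^*\|,\|(I-P_m)^*\|\le1$.
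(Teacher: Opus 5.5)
Your argument is correct, but it takes a different route from the paper's. The paper first uses property $(M)$ of $X_c^*$ (Proposition~\ref{prop:Xdual}) to replace $f$ by $\|f\|\,e_0^*$; this does not change $\limsup_k\|f+h_k\|$. It then subtracts the scalar component $h_k(e_0)e_0^*$, which tends to zero, and applies Proposition~\ref{prop:blockdual} only with $m=0$. There is no approximation of $f$ and no $\varepsilon$ bookkeeping.

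You never invoke property $(M)$. Instead you approximate $f$ by the finitely supported $P_m^*f$, kill the head $P_m^*h_k$ by a finite-rank argument, and use the block inequality for general $m$. This is a gliding-hump argument.

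What your version buys is that the quantitative Opial estimate follows from the separated-block inequality and the shrinking basis alone. Property $(M)$, and hence Kalton's result for $A(N_{n,c}^*)$, then enters only at the final WMP step through Proposition~\ref{prop:HanKimcriterion}. The paper's version is shorter and needs only the $m=0$ case of the block inequality. It pays for this by using $(M)$, which it needs anyway for Theorem~\ref{thm:dualWMP}.

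Two minor remarks on your write-up. The passage to a subsequence is unnecessary: $\limsup_k\|g_k\|\ge L$ holds directly, and $\limsup_k(\|f_m\|^2+\|g_k\|^2)^{1/2}=(\|f_m\|^2+(\limsup_k\|g_k\|)^2)^{1/2}$. The separate case $L=0$ in the Opial step is also unnecessary, since $(\|f\|^2+L^2)^{1/2}>L$ whenever $f\ne0$.
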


\begin{proof}
Set $a=\|f\|_{X_c^*}$. Since $\|e_0^*\|=1$ and $X_c^*$ has property $(M)$,
\[
       \limsup_k\|f+h_k\|_{X_c^*}
       =\limsup_k\|ae_0^*+h_k\|_{X_c^*}.
       \tag{5.6}\label{eq:Mreplace}
\]
Write
\[
       \alpha_k=h_k(e_0),
       \qquad
       g_k=h_k-\alpha_ke_0^*.
\]
Weak convergence gives $\alpha_k\to0$, and therefore
\[
       \|h_k-g_k\|_{X_c^*}\to0,
       \qquad
       \limsup_k\|g_k\|_{X_c^*}=L.
       \tag{5.7}\label{eq:gklimsup}
\]
Moreover, $P_0^*g_k=0$. Proposition~\ref{prop:blockdual} gives
\[
       \|ae_0^*+g_k\|_{X_c^*}^2
       \ge a^2+\|g_k\|_{X_c^*}^2.
\]
Using \eqref{eq:Mreplace}--\eqref{eq:gklimsup} and the fact that replacing $h_k$ by
$g_k$ changes the relevant norms by a quantity tending to zero, we obtain
\eqref{eq:quantOpial}. If $f\ne0$, the right-hand side is strictly larger than $L$,
and hence $X_c^*$ has the Opial property.
\end{proof}

\begin{theorem}\label{thm:dualWMP}
For every $0<c<1$, the pair $(X_c^*,X_c^*)$ has the weak maximizing property.
\end{theorem}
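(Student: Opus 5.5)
The plan is to apply Proposition~\ref{prop:HanKimcriterion} with $E=X_c^*$, which requires three inputs: reflexivity of $X_c^*$, property $(M)$ for $X_c^*$, and the Opial property for $X_c^*$.

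\emph{Reflexivity.} By Lemma~\ref{lem:l2estimate}, $X_c$ is isomorphic to $\ell_2$ and hence reflexive. The dual of a reflexive space is reflexive, so $X_c^*$ is reflexive.

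\emph{Property $(M)$.} Proposition~\ref{prop:Xdual} identifies $X_c^*$ isometrically with $A(N_{n,c}^*)$. Each $N_{n,c}^*$ is an absolute normalized norm, so Kalton's result \cite[Proposition~3.2]{Kalton1993} applies and gives property $(M)$ for $X_c^*$.

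\emph{Opial property.} This is the last assertion of Theorem~\ref{thm:quantOpial}. It follows from the quantitative inequality \eqref{eq:quantOpial}: for $f\neq0$, the right-hand side $(\|f\|^2+L^2)^{1/2}$ is strictly larger than $L=\limsup_k\|h_k\|$.

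With these three facts, Proposition~\ref{prop:HanKimcriterion} yields that $(X_c^*,X_c^*)$ has the WMP.

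The only point I would check is that the version of the Opial property used in Han and Kim's criterion matches the one stated here, namely strict inequality of $\limsup$'s for every weakly null sequence and every $x\neq0$. Their property $(O)$, combined with $(M)$, is supposed to give strict property $(M)$ via \cite[Lemma~14]{HanKim2025}.

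If their formulation were instead the stronger requirement with $\liminf$, or one on the diagonal pair, I would handle it in two ways. First, passing to subsequences turns a $\liminf$ statement into a $\limsup$ one, and Theorem~\ref{thm:quantOpial} holds along every subsequence. Second, \cite[Proposition~15]{HanKim2025} transfers the property from the space $X_c^*$ to the pair $(X_c^*,X_c^*)$. Beyond this matching step, there is no real obstacle: the substance was already done in Proposition~\ref{prop:blockdual} and Theorem~\ref{thm:quantOpial}.
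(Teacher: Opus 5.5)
Your proposal is correct and follows the paper's proof exactly: it checks reflexivity of $X_c^*$, gets property $(M)$ from Proposition~\ref{prop:Xdual}, gets the Opial property from Theorem~\ref{thm:quantOpial}, and concludes with Proposition~\ref{prop:HanKimcriterion}. Your worry about matching Han and Kim's formulation is not needed, because Proposition~\ref{prop:HanKimcriterion} is stated in the paper with precisely the $\limsup$ version of the Opial property. Your subsequence remark would in any case handle a $\liminf$ variant.
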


\begin{proof}
The space $X_c^*$ is reflexive. By Proposition~\ref{prop:Xdual}, it has property $(M)$,
and by Theorem~\ref{thm:quantOpial}, it has the Opial property. The conclusion follows
from Proposition~\ref{prop:HanKimcriterion}.
\end{proof}

We can now draw the two consequences announced in the introduction.

\begin{corollary}\label{cor:duality}
The weak maximizing property is not stable under duality, even among real reflexive
spaces which can be chosen arbitrarily close to $\ell_2$ in Banach--Mazur distance.
More precisely, for every $0<c<1$, if $E_c=X_c^*$, then
\[
       (E_c,E_c)\text{ has the WMP},
       \qquad
       (E_c^*,E_c^*)\text{ does not have the WMP},
\]
and
\[
       \dBM(E_c,\ell_2)\le c^{-1}.
\]
\end{corollary}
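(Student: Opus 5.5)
The corollary follows by assembling results already established; no new estimates are needed. Fix $0<c<1$ and put $E_c=X_c^*$. We verify the three assertions in turn.

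First, $(E_c,E_c)$ has the WMP. This is exactly Theorem~\ref{thm:dualWMP}. Recall that its proof rests on three ingredients: the reflexivity of $X_c^*$ (Lemma~\ref{lem:l2estimate}), property $(M)$ for $X_c^*$ (Proposition~\ref{prop:Xdual}), and the Opial property (Theorem~\ref{thm:quantOpial}). These are combined through Proposition~\ref{prop:HanKimcriterion}.

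Second, $(E_c^*,E_c^*)$ fails the WMP. Here the only point needing care is the identification of $E_c^*$ with $X_c$. Since $X_c$ is reflexive, the canonical map $J:X_c\to X_c^{**}=E_c^*$ is a surjective linear isometry. The WMP is invariant under isometric isomorphisms. Indeed, if $T\in\Lop(X_c)$ fails to attain its norm but has a non-weakly null maximizing sequence $(x_n)$, then $JTJ^{-1}\in\Lop(E_c^*)$ has the same norm and still does not attain it. Moreover, $(Jx_n)$ is a maximizing sequence for $JTJ^{-1}$, and it is not weakly null because $J$ is a weak-to-weak homeomorphism. Hence Theorem~\ref{thm:XcnotWMP}, with the witness $D_c$, transfers to show that $(E_c^*,E_c^*)$ fails the WMP.

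Third, the distance estimate. The bound $\dBM(E_c,\ell_2)\le c^{-1}$ is the final sentence of Lemma~\ref{lem:l2estimate}. If $U:X_c\to\ell_2$ is an isomorphism with $\|U\|\|U^{-1}\|\le c^{-1}$, then $U^*:\ell_2\to X_c^*$ satisfies $\|U^*\|\,\|(U^*)^{-1}\|\le c^{-1}$. Composing with the Riesz identification $\ell_2^*\cong\ell_2$ gives the required isomorphism. Remark~\ref{rem:exactBM} even gives equality, but only the inequality is needed here.

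Finally, since $c^{-1}\to1$ as $c\uparrow1$, the pairs $(E_c,E_c)$ can be taken arbitrarily close to $\ell_2$. This yields the qualitative statement that the WMP is not stable under duality among real reflexive spaces. The main step requiring any attention is the bookkeeping with the canonical isometry in the second step; everything else is a direct citation.
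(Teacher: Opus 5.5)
Your proposal is correct and follows the paper's proof. You cite Theorem~\ref{thm:dualWMP}, transfer Theorem~\ref{thm:XcnotWMP} through the canonical isometry $X_c\cong X_c^{**}$, and take the distance bound from Lemma~\ref{lem:l2estimate}; your added detail on invariance of the WMP under isometries and on dualizing the Banach--Mazur estimate is accurate and simply spells out what the paper leaves implicit.
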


\begin{proof}
Theorem~\ref{thm:dualWMP} gives the WMP for $(E_c,E_c)$. Since $X_c$ is reflexive,
$E_c^*=X_c^{**}$ is canonically isometric to $X_c$, and
Theorem~\ref{thm:XcnotWMP} shows that $(E_c^*,E_c^*)$ fails the WMP. The distance
estimate follows from Lemma~\ref{lem:l2estimate}. For reflexive spaces the weak$^*$ and
weak topologies on the dual coincide, so this also gives a negative answer to
\cite[Question~5.12]{GLP2021}.
\end{proof}

\begin{corollary}\label{cor:CPPnotWMP}
For every $0<c<1$, the pair $(X_c,X_c)$ has the compact perturbation property but does
not have the weak maximizing property. In particular, the CPP does not imply the WMP
even for reflexive self-pairs on spaces isomorphic to $\ell_2$, and the counterexamples
may be chosen arbitrarily close to $\ell_2$ in Banach--Mazur distance.
\end{corollary}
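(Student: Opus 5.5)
The plan is to assemble the corollary from results already established, without any new estimate. The failure of the WMP for $(X_c,X_c)$ is exactly Theorem~\ref{thm:XcnotWMP}, witnessed by the diagonal operator $D_c$ and the non-weakly null maximizing sequence $(u_{n,c})$. So the real content is the CPP for $(X_c,X_c)$, which I would obtain by duality rather than directly.

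\emph{Step 1.} By Theorem~\ref{thm:dualWMP}, the pair $(X_c^*,X_c^*)$ has the WMP. Since the WMP implies the CPP \cite[Proposition~2.4]{AGPT2020}, the pair $(X_c^*,X_c^*)$ has the CPP.

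\emph{Step 2.} Apply Proposition~\ref{prop:CPPdual} with $E=F=X_c^*$. This is legitimate because $X_c^*$ is reflexive by Lemma~\ref{lem:l2estimate}. It shows that $(X_c^{**},X_c^{**})$ has the CPP.

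\emph{Step 3.} Transfer this back to $X_c$ through the canonical isometry $J:X_c\to X_c^{**}$, which is onto by reflexivity. Conjugation $T\mapsto J T J^{-1}$ is an isometric bijection of $\Lop(X_c)$ onto $\Lop(X_c^{**})$. It maps compact operators to compact operators and norm-attaining operators to norm-attaining operators, and both properties are reversible. The CPP is invariant under this transfer, so $(X_c,X_c)$ has the CPP. Alternatively, one can apply Proposition~\ref{prop:CPPdual} to $E=F=X_c$ directly, reading its equivalence backwards.

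\emph{Step 4.} The remaining claims are bookkeeping. The space $X_c$ is isomorphic to $\ell_2$ with $\dBM(X_c,\ell_2)\le c^{-1}$ by \eqref{eq:BM}, and this bound tends to $1$ as $c\uparrow1$. This gives counterexamples to CPP$\Rightarrow$WMP among reflexive self-pairs arbitrarily close to $\ell_2$, which answers Aron's question negatively.

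There is no serious obstacle. The only point needing care is Step~3, namely confirming that the identification $X_c^{**}=X_c$ respects compactness, norm attainment and operator norms, so that the CPP genuinely descends to $(X_c,X_c)$. Since $J$ is an onto isometry, this is routine.
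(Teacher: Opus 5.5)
Your proposal is correct and matches the paper's proof: the WMP for $(X_c^*,X_c^*)$ from Theorem~\ref{thm:dualWMP} gives the CPP there, Proposition~\ref{prop:CPPdual} transfers it to $(X_c,X_c)$, Theorem~\ref{thm:XcnotWMP} supplies the failure of the WMP, and \eqref{eq:BM} gives the distance bound. The paper applies Proposition~\ref{prop:CPPdual} directly with $E=F=X_c$, which is your ``alternatively''; the detour through $X_c^{**}$ and conjugation by $J$ in Step~3 is valid but unnecessary.
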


\begin{proof}
By Theorem~\ref{thm:dualWMP}, $(X_c^*,X_c^*)$ has the WMP and therefore has the CPP.
Proposition~\ref{prop:CPPdual} then implies that $(X_c,X_c)$ has the CPP. On the other
hand, Theorem~\ref{thm:XcnotWMP} shows that it does not have the WMP. Finally,
\eqref{eq:BM} gives $\dBM(X_c,\ell_2)\le c^{-1}\to1$ as $c\uparrow1$.
\end{proof}

\begin{remark}\label{rem:directCPP}
There is also a direct way to verify the CPP for $(X_c,X_c)$. Kalton's results imply
that $X_c=A(N_{n,c})$ has property $(M)$. If $(P_m)$ denotes the canonical partial sum
projections, then $P_mx\to x$ for every $x\in X_c$ and, since $X_c$ is reflexive and its
canonical basis is shrinking, $P_m^*f\to f$ for every $f\in X_c^*$. Moreover, the
$1$-unconditionality of the basis gives
\[
       \|I-2P_m\|=1\qquad(m\ge0).
\]
Thus Kalton's criterion \cite[Theorem~2.4]{Kalton1993} shows that $\K(X_c)$ is an
$M$-ideal in $\Lop(X_c)$. By \cite[Theorem~1]{HanKim2025} this yields the adjoint
compact perturbation property. Since $X_c$ is reflexive, the latter is equivalent to the
CPP for $(X_c,X_c)$; compare also \cite{Werner1992}. We have preferred the duality
argument above because it also makes transparent the logical relation between the two
questions considered in the paper.
\end{remark}

\begin{remark}
The construction exhibits a geometric asymmetry which is independent of the compact
perturbation argument. Both $X_c$ and $X_c^*$ have property $(M)$, but $X_c$ fails the
Opial property by Remark~\ref{rem:XnotOpial}, whereas $X_c^*$ satisfies the stronger
quantitative estimate \eqref{eq:quantOpial}. In the terminology of Han and Kim, this is
the strictness needed to recover the WMP on the dual side.
\end{remark}

\section*{Funding}
The first and second authors have been supported by the grant PID2021-122126NB-C33 funded by MICIU / AEI / 10.13039 / 501100011033. The first author has also been supported by the grant PID2025-168101NB-I00 funded by MICIU / AEI / 10.13039 / 501100011033.

\end{document}